\newif\ifOptLetters
\OptLettersfalse

\documentclass[11pt,letterpaper]{article}	

\usepackage{graphicx,fancyhdr,amsmath,amssymb,subfig,url}
\usepackage{amsthm}
\usepackage[numbers,square,sort,compress]{natbib}

\newtheorem{definition}{Definition}

\newtheorem{lemma}{Lemma}
\newtheorem{theorem}{Theorem}
\newtheorem{corollary}{Corollary}

\usepackage{sectsty}
\usepackage[margin=1in]{geometry}

\usepackage[algo2e]{algorithm2e}

\usepackage{accents}

\usepackage{graphicx}
\usepackage{grffile}
\usepackage{longtable}
\usepackage{booktabs}       %
\usepackage{tablefootnote}

\usepackage{physics}
\usepackage{float,color}
\usepackage{enumerate}
\usepackage{thm-restate}
\usepackage{cleveref}

\newcommand{\zeros}{\mathbf 0}

\newcommand{\dist}{\mathop{\bf dist{}}}

\newcommand{\minimize}{\mathop{\rm minimize}}
\newcommand{\maximize}{\mathop{\rm maximize}}

\def\e{e}

\newcommand{\Lag}{\mathcal{L}}

\def\N{\mathbb{N}}

\def\R{\mathbb{R}}

\providecommand{\T}{}
\renewcommand{\T}{\top}

\newcommand{\ZStar}{Z^{\star}}

\newcommand{\diam}{\mathop{\bf diam{}}}

\newcommand{\proj}{\mathop{\bf proj}}

\SetKwProg{Fn}{Function}{:}{}
\SetKwFunction{PrimalWeightUpdate}{PrimalWeightUpdate}
\SetKwFunction{AdaptiveStepOfPDHG}{AdaptiveStepOfPDHG}
\SetKwFunction{StepOfPDHG}{StepOfPDHG}
\SetKwFunction{GetRestartCandidate}{GetRestartCandidate}
\SetKwFunction{InitializePrimalWeight}{InitializePrimalWeight}
\SetKwFunction{OneStepOfPDHG}{OneStepOfPDHG}
\SetKwProg{Fn}{function}{}{end}

\newcommand{\ME}{H} %
\newcommand{\submat}{W}

\newcommand{\M}{M}

\newcommand{\hoff}{\alpha}

\newcommand{\XStar}{X^\star}
\newcommand{\YStar}{Y^\star}

\newcommand{\nrow}{m_1}
\newcommand{\ncol}{m_2}
\newcommand{\nnz}{\textbf{nnz}(A)}

\newcommand{\PaperTitle}[0]{Worst-case analysis of restarted primal-dual hybrid gradient on totally unimodular linear programs}

\newcommand{\AbstractText}[0]{We analyze restarted PDHG on totally unimodular linear programs. In particular, we show that restarted PDHG finds an $\epsilon$-optimal solution in $O( \ME \nrow^{2.5} \sqrt{\nnz} \log(\ME \ncol /\epsilon) )$ matrix-vector multiplies where $\nrow$ is the number of constraints, $\ncol$ the number of variables, $\nnz$ is the number of nonzeros in the constraint matrix, $\ME$ is the largest absolute coefficient in the right hand side or objective vector, and $\epsilon$ is the distance to optimality of the outputted solution.}

\begin{document}

\title{\PaperTitle}

\author{Oliver Hinder}
\date{}

\maketitle

\abstract{\AbstractText}

\section{Introduction}
Consider the following linear program:
\begin{subequations}\label{primal-LP}
\begin{flalign}
\minimize_{x \in \R^{\ncol}} c^\T x \\
Ax = b \\
x \ge \zeros
\end{flalign}
\end{subequations}
and its dual $\maximize_{y \in \R^{\nrow}} b^\T y$ subject to $A^\T y \le c$,
where $\nrow$ and $\ncol$ are positive integers, $x$ and $y$ are the primal and dual variables, and $A \in \R^{\nrow \times \ncol}$, $c \in \R^{\ncol}$, $b \in \R^{\nrow}$ are the problem parameters.
Traditional methods for solving linear programs such as simplex %
and interior point methods %
require linear system factorizations, which have high memory overhead and are difficult to parallelize. Recently, there has been interest in using first-order methods for solving linear programs that use matrix-vector multiplication as their key primitive \cite{lin2021admm,applegate2022faster,o2016conic}. The advantage of matrix-vector multiplication is that it can be efficiently parallelized across multiple cores or machines. Moreover, matrix-vector multiplication has low memory footprint, using minimal additional memory beyond storing the problem data. These properties make these first-order methods suitable for tackling extreme-scale problems. 

First-order methods reformulate finding a primal and dual optimal solution to the linear program \eqref{primal-LP} as solving a minimax problem:
\begin{equation}\label{eq:poi-primal-dual}
    \min_{x \ge \zeros}\max_{y \in \R^{\nrow}} \Lag(x,y) = c^\T x + b^\T y - y^\T A x
\end{equation}
and then apply methods designed for solving minimax problems such as 
primal-dual hybrid gradient (PDHG) \cite{chambolle2011first} or the alternating direction method of multipliers (ADMM) \cite{douglas1956numerical}. See \citet{applegate2021practical} and \citet{applegate2022faster} for a more exhaustive introduction and list of references.

Empirically, a promising first-order method for solving linear programs is primal-dual hybrid gradient for linear programming (PDLP) \cite{applegate2021practical}. PDLP is based on restarted PDHG, combined with several other heuristics, for example, preconditioning and adaptively choosing the primal and dual step sizes. Restarted PDHG was analyzed by \citet{applegate2022faster} on linear programs, but their convergence bounds depend on the Hoffman constant of the KKT system. This bound is difficult to interpret and is not easily computable. Recent work by \citet{lu2023geometry} and \citet{xiong2023computational} develop different, more interpretable linear convergence bounds for PDHG. However, it is unclear if these new bounds 
would be useful for analyzing totally unimodular linear programs.

In this paper, we extend the analysis of \citet{applegate2022faster} to provide an explicit complexity bound when this method is applied to totally unimodular linear programs \cite{hoffmann1956integral}. Totally unimodular linear programs are an important subclass of linear programs which, for any integer right hand side and objective coefficients, all extreme points are integer. This subclass is of particular interest to the integer programming community \cite{conforti2014integer}. It also encapsulates the minimum cost flow problem, an important subclass of linear programs, for which almost linear-time algorithms exist \cite{chen2022maximum}.

This work analyzes a general-purpose linear programming method on a specialized problem. Many papers 
perform this style of analysis for the simplex method. For example, even though the simplex method has worst-case exponential runtime on general linear programs, improved guarantees for the simplex method exist for
subclasses such as Markov decision processes \cite{ye2011simplex,post2015simplex}, minimum cost flow \cite{orlin1997polynomial,dantzig1951application} and totally unimodular linear programs \cite{kitahara2013bound,mizuno2016simplex}. In particular, \citet{kitahara2013bound} shows that the number of iterations of the
simplex method to find an exact optimal solution on a totally unimodular linear program with a nondegenerate primal is 
$\ncol \lceil \nrow \| b \|_1 \log( \ncol \| b \|_1) \rceil$. Better complexities for this problem can be achieved by interior point methods \cite{lee2014path}, although at the cost of potentially much higher memory usage.

Finally, in independent work, \citet{cole2023first} studies the performance of first-order methods for linear programming on totally unimodular linear programs.
Their work is strongly related to ours. However, there are a few important differences: (i) our result studies an algorithm with proven practical performance \cite{applegate2021practical}, (ii) their bounds depend on $\log(H)$ instead of $H$ but for a fixed $H$ our bound has better dependence on $\nrow$, $\ncol$ and $\nnz$, and (iii) their results also extend to more general $A$ matrices (those with bounded max circuit imbalance measure). Our experiments summarized in Table~\ref{table:restart-pdhg-hard-example} indicate that restarted PDHG has an iteration bound of at least $\Omega(H)$ on totally unimodular linear programs.

\begin{table}[!t]
	\centering
\begin{tabular}{lrrrrr}
	\toprule
	$H$ & $10^2$ & $10^4$ & $10^6$ \\
	\midrule
	\# iterations & $1.8\times10^2$ & $1.7 \times 10^4$ & $1.7 \times 10^6$ \\
	\bottomrule
\end{tabular}
\caption{
Total number of iterations of restarted PDHG until the distance to optimality contracts by a factor of ten on the totally unimodular linear program $\min_{x_1, x_2 \ge 0} (H-1) x_1 + H x_2$ s.t.  $x_1 + x_2 = H$. The restart length is fixed but instance-wise tuned over the grid $2^1, 2^2, \dots, 2^{21}$ (with number of total iterations of the best restart length reported). The step size $\eta$ is set to 0.5 and the primal and dual variables are initialized at the origin. %
}\label{table:restart-pdhg-hard-example}
\end{table}

\paragraph{Notation}  Let $\R$ be the set of real numbers and $\N$ be the set of natural numbers starting from one. Denote $\{ 1, \dots, m \}$ by $[ m ] $.
Let $\nnz$ be the number of nonzeros in $A$. Assume $\ncol \ge \nrow$ and
that $\abs{b_i} \le \ME$ for all $i \in [\nrow]$ and $\abs{c_j} \le \ME$ for all $j \in [\ncol]$. 
 Let $\| \cdot \|_2$ be the Euclidean norm for vectors and spectral norm for matrices.
Let $\sigma_{\min}(M) := \min_{\| v \|_2 = 1} \| M v \|_2$ be the minimum singular value of a matrix $M$, 
$Z = \{ x \in \R^{\ncol} : x \ge 0 \} \times \R^{\nrow}$,
$W_r(z) := \{ \hat{z} \in Z : \| z - \hat{z} \|_2 \le r \}$, and 
$\dist(z, Z) := \min_{\hat{z} \in Z} \| z - \hat{z} \|_2$.
Let $\XStar$ be the set of optimal primal solutions to 
\eqref{primal-LP} and $\YStar$ be the set of optimal dual solutions.
Define $\ZStar = \XStar \times \YStar$.
Let $\e_i$ be a vector containing a one in the $i$th entry and zero elsewhere.
Let $\mathbf{1}$ be a matrix or vector of ones, and $\mathbf{0}$ be a matrix or vector of zeros.
Let $\left( \cdot \right)^{+} = \max\{ \cdot, \zeros \}$
where the max operator is applied element-wise.

\paragraph{Paper outline} \Cref{sec:restarted-PDHG} provides background on 
restarted PDHG, 
\Cref{sec:hoffman-bound} provides a new Hoffman bound that 
we will find useful and
\Cref{sec:analysis-restarted-PDHG} proves the main result.

\section{Background on restarted PDHG}\label{sec:restarted-PDHG}

This section introduces concepts from \citet{applegate2022faster} that will be useful for our analysis.
For ease of exposition, we specialize PDHG to linear programming (Algorithm~\ref{alg:one-step-PDHG}).
See \citet{chambolle2011first} for the general PDHG formula.

\begin{algorithm2e}[h]
	\Fn{\OneStepOfPDHG{$z, \eta$}}{
		$x' \gets \proj_{x \ge 0}\left( x - \eta (c - A^\T y) \right)$ \;
		$y' \gets y - \eta (b - A (2 x' - x))$ \;
		\Return{$(x', y')$}
	}
	\caption{One step of PDHG on \eqref{eq:poi-primal-dual}}\label{alg:one-step-PDHG}
\end{algorithm2e}

\noindent A key concept for restarted PDHG is the \emph{normalized duality gap} \cite{applegate2022faster}
defined 
for $r > 0$ as
\[
\rho_r(z) := \frac{\max_{\hat{z} \in W_r(z)} \Lag(x,\hat{y}) - \Lag(\hat{x}, y)}{r}
\]
where for conciseness, we use the notation $(\hat{x},\hat{y}) =\hat{z}$ (this notation is used throughout the paper, i.e., we also have $(x,y) = z$),
and for completeness define $\rho_0(z) := \limsup_{r \rightarrow 0^{+}} \rho_r(z)$.
The normalized duality gap is preferable over the standard duality gap,
$\max_{z \in Z} \Lag(x,\hat{y}) - \Lag(\hat{x}, y)$, because when 
$Z$ is unbounded the standard duality gap can be infinite even when 
we are arbitrarily close to an optimal solution in both the primal and dual.
Next, we introduce the definition of a primal-dual problem being sharp (Definition~\ref{def:sharp-pd-problem}) and PDHG with adaptive restarts (Algorithm~\ref{alg:restarted-pdhg}) along with three results from \citet{applegate2022faster} that we will use in this paper.

\begin{definition}[Definition~1 of \citet{applegate2022faster}]
	\label{def:sharp-pd-problem}
	We say a primal-dual problem \eqref{eq:poi-primal-dual} is $\alpha$-sharp on 
	the set $S \subseteq Z$ if
	for all $r \in (0, \diam(S)]$ and $z \in S$ that
	$\alpha \dist(z, \ZStar) \le \rho_r(z)$.
	\end{definition}

\begin{algorithm2e}[h]
\textbf{Input:} $z^{0,0}, \tau^{0}, \beta, \eta$ \\
\For{$n=0, \dots, \infty$}{
$t \gets 0$ \\
\Repeat{[$n = 0$ and $t \ge \tau^{0}$] or $\rho_{\| \bar{z}^{n,t} - z^{n,0} \|_2}(\bar{z}^{n,t}) \le \beta \rho_{\| z^{n,0} - z^{n-1,0} \|_2}(z^{n,0})$}{
$z^{n,t+1} \gets$ \OneStepOfPDHG{$z^{n,t}, \eta$} \;
$\bar{z}^{n,t+1} \gets \frac{1}{t+1} \sum_{i=1}^{t+1} z^{n,i}$ \;
$t \gets t + 1$
}
$z^{n+1,0} \gets \bar{z}^{n,t}$\;
}
\caption{PDHG with adaptive restarts \cite[Algorithm~1]{applegate2022faster}.}\label{alg:restarted-pdhg}
\end{algorithm2e} 

\begin{lemma}\label{lem:restarted-PDHG-distance-bound}
Algorithm~\ref{alg:restarted-pdhg} for $z^{0,0} \in Z$ satisfies
$z^{n,0} \in W_{\theta \dist(z^{0,0}, \ZStar)}(z^{0,0})$ for
$\theta = 2 \sqrt{\frac{1 + \eta \| A \|_2}{1 - \eta \| A \|_2}}$.
\end{lemma}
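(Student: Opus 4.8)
The plan is to exploit the standard interpretation of PDHG (Algorithm~\ref{alg:one-step-PDHG}) as a preconditioned proximal-point iteration, whose single-step non-expansiveness drives the whole argument. The relevant preconditioner is
\[
\M = \begin{pmatrix} \tfrac{1}{\eta}\eye & -A^\T \\ -A & \tfrac{1}{\eta}\eye \end{pmatrix},
\]
which is positive definite precisely when $\eta \| A \|_2 < 1$ (exactly the regime in which $\theta$ is defined). Writing $\| v \|_{\M}^2 = v^\T \M v$, I would first record the key contraction: for every $z^\star \in \ZStar$ and every PDHG iterate $z$, one application of Algorithm~\ref{alg:one-step-PDHG} produces $z^+$ satisfying $\| z^+ - z^\star \|_{\M} \le \| z - z^\star \|_{\M}$. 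This is the engine of the proof, and I would quote it from \citet{applegate2022faster}/\citet{chambolle2011first} rather than re-derive it.

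Second, I would lift this from single iterates to the averaged iterates used by the restart scheme. Fixing a \emph{single} $z^\star \in \ZStar$ and applying the contraction repeatedly gives $\| z^{n,i} - z^\star \|_{\M} \le \| z^{n,0} - z^\star \|_{\M}$ for every $i$ within epoch $n$; since $\bar z^{n,t}$ is a convex combination of $z^{n,1},\dots,z^{n,t}$, convexity of $\| \cdot \|_{\M}$ yields $\| \bar z^{n,t} - z^\star \|_{\M} \le \| z^{n,0} - z^\star \|_{\M}$. Because each restart sets $z^{n+1,0} = \bar z^{n,t}$, a one-line induction over $n$ then shows
\[
\| z^{n,0} - z^\star \|_{\M} \le \| z^{0,0} - z^\star \|_{\M} \qquad \text{for all } n \ge 0.
\]
The same averaging observation also shows $z^{n,0} \in Z$: the $x$-block of each PDHG iterate is nonnegative (it is a projection onto $x \ge 0$), and nonnegativity is preserved under averaging.

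Third, I would choose $z^\star$ to be the \emph{Euclidean} projection of $z^{0,0}$ onto $\ZStar$, so that $\| z^{0,0} - z^\star \|_2 = \dist(z^{0,0}, \ZStar)$, and convert between the two norms. A triangle inequality gives $\| z^{n,0} - z^{0,0} \|_{\M} \le \| z^{n,0} - z^\star \|_{\M} + \| z^\star - z^{0,0} \|_{\M} \le 2 \| z^{0,0} - z^\star \|_{\M}$. Combining this with the two-sided spectral bounds $\lambda_{\min}(\M) \| v \|_2^2 \le \| v \|_{\M}^2 \le \lambda_{\max}(\M) \| v \|_2^2$ yields
\[
\| z^{n,0} - z^{0,0} \|_2 \le \tfrac{2}{\sqrt{\lambda_{\min}(\M)}} \| z^{0,0} - z^\star \|_{\M} \le 2 \sqrt{\tfrac{\lambda_{\max}(\M)}{\lambda_{\min}(\M)}} \; \dist(z^{0,0}, \ZStar).
\]
Finally, a short spectral computation — the eigenvalues of $\begin{pmatrix} 0 & A^\T \\ A & 0 \end{pmatrix}$ are $\pm \sigma_i(A)$, so those of $\M$ lie in $[\tfrac{1 - \eta \| A \|_2}{\eta}, \tfrac{1 + \eta \| A \|_2}{\eta}]$ — gives $\lambda_{\max}(\M)/\lambda_{\min}(\M) = \tfrac{1 + \eta \| A \|_2}{1 - \eta \| A \|_2}$, which is exactly $(\theta/2)^2$, completing the bound.

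I expect the only genuinely delicate point to be the non-expansiveness in the first step — in particular verifying that PDHG really is the $\M$-weighted resolvent iteration and that $\M \succ 0$ under $\eta \| A \|_2 < 1$. Everything afterward (the averaging argument, the induction, and the norm conversion) is routine, provided one is careful to fix a single optimal point $z^\star$ across all epochs rather than re-projecting at each restart.
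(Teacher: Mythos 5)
Your proposal is correct and follows essentially the same route as the paper: the paper simply cites Proposition~9 of \citet{applegate2022faster} for the bound $\| z^{n,0} - z^{0,0} \|_{\eta A} \le 2 \| z^{0,0} - z^\star \|_{\eta A}$ (which is exactly your nonexpansiveness--averaging--induction--triangle-inequality chain, since $\|\cdot\|_{\M}$ is a rescaling of $\|\cdot\|_{\eta A}$) and Proposition~7 for the two-sided norm equivalence (your eigenvalue computation). The only difference is that you unpack those two cited propositions from first principles, which makes your write-up more self-contained but mathematically identical.
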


\begin{proof}
Proposition 9 of \citet{applegate2022faster} (which uses the norm $\| z \|_{\eta A} := \| x \|_2^2 - 2 \eta x^\T A y + \| y \|_2^2$) states that 
$\| z^{n,0} - z^{0,0} \|_{\eta A} \le 2 \| z^{0,0} - z^\star \|_{\eta A}$ for any starting point
$z^{0,0} \in Z$ and optimal solution $z^\star \in \ZStar$.
Proposition 7 of \citet{applegate2022faster} states that 
$(1 -  \eta \| A \|_2 ) \| z \|_2^2 \le  \| z \|_{\eta A} \le (1 + \eta \| A \|_2 ) \| z \|_2^2$
for all $z \in Z$.
Combining these two statements 
yields
$(1 - \eta \| A \|_2) \| z^{n,0} - z^{0,0} \|_2^2 \le \| z^{n,0} - z^{0,0} \|_{\eta A}^2 \le 2^2 \| z^{0,0} - z^\star \|_{\eta A}^2	\le 2^2 (1 + \eta \| A \|_2) \| z^{0,0} - z^\star \|_2^2$
for all $z^\star \in \ZStar$.
Rearranging gives the result.
\end{proof}
	
\begin{theorem}[\citet{applegate2022faster}] \label{thm:restarted-pdhg-basic-result}
Consider the sequence $\{z^{n,0}\}_{n=0}^{\infty}$ and $\{\tau^{n} \}_{n=1}^{\infty}$ generated
by Algorithm~\ref{alg:restarted-pdhg} with $\eta \in (0, 1/ \| A \|_2)$ and $\beta \in (0,1)$. 
Suppose that there exists a set 
$S \subseteq Z$ such that $z^{n,0} \in S$ for any $n \ge 0$ and the primal-dual problem \eqref{eq:poi-primal-dual}
is $\alpha$-sharp on the set $S$. Then, for each outer iteration $n \in \N$ we have
\begin{itemize}
\item The restart length, $\tau^{n}$, is upper bounded by $t^{\star}$:
$\tau^{n} \le t^{\star} := \Big\lceil \frac{2 C (q + 2)}{\alpha \beta} \Big\rceil$
with $C := \frac{2}{\eta (1 - \eta \| A \|_2)}$ and $q := 4 \frac{1 + \eta \| A \|_2}{1 - \eta \| A \|_2}$.
\item The distance to the primal-dual optimal solution set decays linearly:
$\dist(z^{n,0},\ZStar) \le \beta^{n} \frac{t^{\star}}{\tau^{0}} \dist(z^{0,0}, \ZStar)$.
\end{itemize}
\end{theorem}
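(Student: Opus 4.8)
The plan is to combine two opposing estimates on the normalized duality gap within a single restart cycle: a sublinear $O(1/t)$ \emph{upper} bound on $\rho_{\|\bar{z}^{n,t}-z^{n,0}\|_2}(\bar{z}^{n,t})$ coming from the ergodic convergence of PDHG, and the sharpness \emph{lower} bound of Definition~\ref{def:sharp-pd-problem}. The first controls how fast the left-hand side of the restart criterion shrinks; the second controls how large its right-hand side stays. Part~1 (the restart-length bound) then follows by locating the smallest $t$ at which the upper bound drops below the threshold $\beta\,\rho_{\|z^{n,0}-z^{n-1,0}\|_2}(z^{n,0})$, and Part~2 (linear decay) follows by telescoping the restart criterion across outer iterations and translating the resulting geometric decay of gaps back into distances through sharpness.

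First I would establish the sublinear bound. Starting from the standard ergodic inequality for PDHG written in the $\|\cdot\|_{\eta A}$-norm of the proof of Lemma~\ref{lem:restarted-PDHG-distance-bound}, namely $\Lag(\bar{x}^{n,t},\hat{y})-\Lag(\hat{x},\bar{y}^{n,t})\le \frac{\|z^{n,0}-\hat{z}\|_{\eta A}^2}{2\eta t}$ for every $\hat{z}$, I would pass to the Euclidean norm using Proposition~7 of \citet{applegate2022faster}. Since $\rho_r(\bar{z}^{n,t})$ maximizes the gap over $\hat{z}\in W_r(\bar{z}^{n,t})$, a ball centered at the moving average rather than at $z^{n,0}$, I would bound $\|z^{n,0}-\hat{z}\|_2\le \|z^{n,0}-\bar{z}^{n,t}\|_2+r$ and then control $\|z^{n,0}-\bar{z}^{n,t}\|_2$ in terms of $\dist(z^{n,0},\ZStar)$ using nonexpansiveness of the averaged iterate toward $\ZStar$ in the $\|\cdot\|_{\eta A}$-norm (Proposition~9 of \citet{applegate2022faster}) together with the norm equivalence; this is exactly where the factor $q$ enters. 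Evaluating at the self-referential radius $r=\|\bar{z}^{n,t}-z^{n,0}\|_2$ used by the algorithm then yields an estimate of the form $\rho_{\|\bar{z}^{n,t}-z^{n,0}\|_2}(\bar{z}^{n,t})\le \frac{2C(q+2)}{t}\,\dist(z^{n,0},\ZStar)$. Invoking sharpness at $z^{n,0}\in S$ (legitimate since $\|z^{n,0}-z^{n-1,0}\|_2\le\diam(S)$) gives $\beta\,\rho_{\|z^{n,0}-z^{n-1,0}\|_2}(z^{n,0})\ge\alpha\beta\,\dist(z^{n,0},\ZStar)$, so the restart criterion is guaranteed to hold once $\frac{2C(q+2)}{t}\le\alpha\beta$, that is, once $t\ge t^\star$; this proves Part~1.

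For Part~2 I would exploit that the algorithm sets $z^{n+1,0}=\bar{z}^{n,\tau^n}$, so the left-hand side of the restart criterion at the stopping time is precisely $\rho_{\|z^{n+1,0}-z^{n,0}\|_2}(z^{n+1,0})$. Writing $\mu_n:=\rho_{\|z^{n,0}-z^{n-1,0}\|_2}(z^{n,0})$, the criterion reads $\mu_{n+1}\le\beta\mu_n$, which telescopes to $\mu_n\le\beta^{n-1}\mu_1$. Sharpness converts this into $\dist(z^{n,0},\ZStar)\le\mu_n/\alpha\le\beta^{n-1}\mu_1/\alpha$, and bounding $\mu_1$ by the sublinear estimate over the forced length-$\tau^0$ initial cycle gives $\mu_1\le\frac{2C(q+2)}{\tau^0}\dist(z^{0,0},\ZStar)$. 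Finally, using $t^\star\ge\frac{2C(q+2)}{\alpha\beta}$ (the defining ceiling) to replace $\frac{2C(q+2)}{\alpha}$ by $\beta t^\star$ collapses these inequalities into the claimed bound $\dist(z^{n,0},\ZStar)\le\beta^n\frac{t^\star}{\tau^0}\dist(z^{0,0},\ZStar)$.

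I expect the main obstacle to be the sublinear bound of Part~1, and specifically pinning down the constant $2C(q+2)$: the normalized duality gap is measured over a ball centered at the \emph{moving} average $\bar{z}^{n,t}$, not at the fixed restart point $z^{n,0}$, so the ergodic inequality cannot be applied directly and one must carefully track both the radius shift $\|z^{n,0}-\bar{z}^{n,t}\|_2$ and its bound in terms of $\dist(z^{n,0},\ZStar)$, each contributing factors through the $\|\cdot\|_{\eta A}$-to-Euclidean norm equivalence. By comparison, the telescoping and sharpness substitutions of Part~2 are routine bookkeeping once Part~1 is in hand.
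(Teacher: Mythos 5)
The paper does not prove this theorem itself---its ``proof'' is a pointer to Theorem~2 and Corollary~2 of \citet{applegate2022faster}---and your sketch is a faithful reconstruction of the argument given in that reference: the $O(1/t)$ ergodic bound on the normalized duality gap (evaluated at the self-referential radius $\|\bar{z}^{n,t}-z^{n,0}\|_2$ and combined with sharpness at $z^{n,0}$) yields the restart-length bound, and telescoping the restart criterion plus one more application of sharpness yields the linear decay. Your outline is correct in structure and correctly locates where $C$ and $q$ enter (the $\|\cdot\|_{\eta A}$-to-Euclidean norm equivalence and the boundedness of the averaged iterates relative to $\ZStar$); the only genuinely delicate step is the constant-tracking in the sublinear bound of Part~1, which you rightly flag as the main obstacle.
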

\begin{proof}
See	Theorem 2 and Corollary 2 of \citet{applegate2022faster}.
\end{proof}

\begin{lemma}\label{lem:normalized-duality-gap-LP-basic-result}
For all $R \in (0,\infty)$, $z \in Z$, and $r \in (0,R]$ with $\| z \|_2 \le R$ we have
\[
\frac{1}{2} \left\| \begin{pmatrix} \frac{1}{R} (c^\T x - b^\T y )^{+} \\
A x - b \\
(A^\T y - c)^{+} \end{pmatrix} \right\|_2 \le \rho_r(z).
\]
\end{lemma}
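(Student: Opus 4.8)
The plan is to lower bound $\rho_r(z)$ by exhibiting two explicit feasible perturbations $\hat z \in W_r(z)$, each of which forces the duality gap to pick up one piece of the target vector, and then to combine the two resulting bounds. The starting point is the identity, valid for any $\hat z = z + \Delta$ with $\Delta = (\Delta_x, \Delta_y)$,
\[
\Lag(x, \hat y) - \Lag(\hat x, y) = (c^\T x - b^\T y) + \hat y^\T(b - Ax) + \hat x^\T(A^\T y - c) = \Delta_x^\T(A^\T y - c) + \Delta_y^\T(b - Ax),
\]
where the constant objective term cancels because $x^\T(A^\T y - c) + y^\T(b - Ax) = b^\T y - c^\T x$. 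Thus for any feasible $\Delta$ (that is, $\|\Delta\|_2 \le r$ and $x + \Delta_x \ge 0$, so that $\hat z \in W_r(z)$) we have $\rho_r(z) \ge \frac{1}{r}\left[\Delta_x^\T(A^\T y - c) + \Delta_y^\T(b - Ax)\right]$, and it remains to choose $\Delta$ well.

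For the first bound I would capture the two infeasibilities by taking $\Delta_x = \alpha (A^\T y - c)^{+}$ and $\Delta_y = \beta(b - Ax)$ with $\alpha, \beta \ge 0$. Feasibility is immediate: $x + \Delta_x \ge x \ge 0$ since we only push the $x$-coordinates upward where the dual residual is positive, and the norm budget is $\alpha^2\|(A^\T y - c)^{+}\|_2^2 + \beta^2\|Ax - b\|_2^2 \le r^2$. The gap then equals $\alpha\|(A^\T y - c)^{+}\|_2^2 + \beta\|Ax - b\|_2^2$ (using $((A^\T y - c)^{+})^\T(A^\T y - c) = \|(A^\T y - c)^{+}\|_2^2$), and optimizing the two nonnegative scalars against the norm constraint (a one-line Cauchy--Schwarz computation) yields
\[
\rho_r(z) \ge \sqrt{\|Ax - b\|_2^2 + \|(A^\T y - c)^{+}\|_2^2} =: P.
\]
For the second bound I would move toward the origin, $\Delta = -s z$ with $s = r/R \in (0,1]$; here $r \le R$ guarantees $s \le 1$, so $\hat x = (1-s)x \ge 0$, and $\|z\|_2 \le R$ guarantees $\|\Delta\|_2 = s\|z\|_2 \le r$. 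The gap becomes $s(c^\T x - b^\T y)$, so after dividing by $r$ and taking a positive part (replace $\Delta$ by $0$ when $c^\T x - b^\T y \le 0$),
\[
\rho_r(z) \ge \tfrac{1}{R}(c^\T x - b^\T y)^{+} =: Q.
\]

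Finally I would combine: since $\rho_r(z) \ge \max(P, Q)$ and $\max(P,Q)^2 \ge \tfrac{1}{2}(P^2 + Q^2)$, and since $P^2 + Q^2$ is exactly the squared Euclidean norm of the target vector in the statement, we obtain $\rho_r(z) \ge \tfrac{1}{\sqrt{2}}\|\cdots\|_2 \ge \tfrac{1}{2}\|\cdots\|_2$, which is even slightly stronger than claimed. The two things to get right are the cancellation identity, so that the objective-gap constant does not contaminate the infeasibility perturbation, and the feasibility bookkeeping for the nonnegativity constraint on $x$: this is precisely why the positive part $(A^\T y - c)^{+}$ appears in the first perturbation (we may only increase $x_i$ freely, not decrease it past zero) and why the bound $s \le 1$ in the second perturbation relies on both $r \le R$ and $\|z\|_2 \le R$. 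Neither step is deep, so I expect the only real care to be in these constraint checks rather than in any hard estimate.
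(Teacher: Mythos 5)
Your proof is correct, and it is essentially the standard argument behind the cited result: the paper itself only points to Equations (22) and (25) of Applegate et al., which are established by exactly this kind of perturbation argument (choosing $\hat z \in W_r(z)$ to extract the primal/dual residuals and the scaled objective gap separately, then combining). Your reconstruction is sound — the cancellation identity, the feasibility checks ($\hat x \ge 0$ via the positive part and via $s = r/R \le 1$), and the $\max(P,Q) \ge \sqrt{(P^2+Q^2)/2}$ combination all hold, and in fact yield the slightly stronger constant $1/\sqrt{2}$.
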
 

\begin{proof}
This is a variant of Lemma~4 of \citet{applegate2022faster}. 
To prove the result it suffices to
combine Equation~(22) and (25) of \cite{applegate2022faster}.
\end{proof}

\section{A Hoffman bound that explicitly takes into account nonnegativity and inequality constraints}\label{sec:hoffman-bound}

Hoffman bounds guarantee how much the distance to feasibility decreases 
as the constraint violation decreases.
Typical Hoffman bounds consider a linear inequality system of the form: $K z \le k$
where $K$ is a matrix and $k$ is a vector.
For example, \cite[Theorem 4.2.]{guler1995approximations}
states that for any matrix $K$, vector $k$ and vector $z$ we have:
$\dist(z, \{ w : K w \le k \}) \le \alpha \| (K z - k)^{+} \|_2$
where
$\alpha > 0$ is the minimum singular value across all 
nonsingular submatrices of $K$.

In \citet{applegate2022faster}, the authors employ Hoffman bounds to show that 
\[ 
\left\| \begin{pmatrix} (c^\T x - b^\T y )^{+} \\
	A x - b \\
	(A^\T y - c)^{+} \end{pmatrix} \right\|_2 
\]
is bounded below by a constant times the distance to optimality.
Using \Cref{lem:normalized-duality-gap-LP-basic-result}, this establishes that $\rho_r$ is sharp.

\Cref{lem:hoff-basic-result} is a Hoffman bound that
explicitly handles both inequality and nonnegativity constraints. Similar types of Hoffman bounds 
exist in the literature (e.g., \cite{pena2021new}) but we were unable to find a bound that 
could be readily adapted to our purpose.
The proof of \Cref{lem:hoff-basic-result} is a blackbox reduction 
to standard Hoffman bounds \cite[Theorem 4.2.]{guler1995approximations}.
In contrast, \citet{applegate2022faster} treats the nonnegativity constraints
as generic inequality constraints. 
However, in this paper we take advantage of the fact that the nonnegativity constraints on 
$x$ are never violated 
(due to PDHG performing a projection).
Explicitly handling these nonnegativity constraints
improves the quality of the Hoffman constant. 
In particular, the nonsingular submatrices considered 
in the calculation are smaller (because they do not contain 
the identity block that \citep[Equation~(20)]{applegate2022faster} introduces).
If we did employ the strategy of \citet{applegate2022faster} the remainder of the paper would remain 
essentially unchanged but the iteration bound in Theorem~\ref{thm:main-result} would contain 
$\ncol + \nrow$ instead of just $\nrow$ because the nonsingular 
submatrix $G$ could be much bigger.
This alternative worst-case bound is inferior for $\ncol \gg \nrow$.

To prove Corollary~\ref{lem:hoff-basic-result} we will find the following Lemma useful.

\newcommand{\region}{U_S}
\newcommand{\MatIneq}{D}
\newcommand{\IneqRHS}{d}
\newcommand{\MatEq}{F}
\newcommand{\EqRHS}{f}
\newcommand{\polytope}{P}
\newcommand{\sol}{u}

\begin{lemma}\label{lem:linear-algebra-schur-fact}
	Define
	$M_{\lambda} := \begin{pmatrix}
		M_{11} & M_{12} \\
		\zeros & \lambda  M_{22} 
	\end{pmatrix}$
	where $M_{22}$ is a square matrix,
	and assume $M_{\lambda}$ is nonsingular for some $\lambda \in (0,\infty)$. Then $M_{11}$ and $M_{22}$ are
	nonsingular and
	\[
	\lim_{\lambda \rightarrow \infty} M_{\lambda}^{-1} = \begin{pmatrix}
		M_{11}^{-1}  & \zeros \\
		\zeros & \zeros
	\end{pmatrix} \ .
	\]
\end{lemma}

\begin{proof}
	Since $M_{\lambda}$ is nonsingular and $M_{22}$ is square we deduce that $M_{11}$ is also square.
	Also, we have for all $u \neq \zeros$ that
	$\zeros \neq M_{\lambda} \begin{pmatrix} u \\ \zeros \end{pmatrix} = M_{11} u$
	which implies that $M_{11}$ is nonsingular.
	Similarly, for all $v \neq \zeros$ we have
	$\zeros \neq M_{\lambda}^\T \begin{pmatrix} \zeros  \\ v \end{pmatrix} = \lambda M_{22}^\T v$
	which implies that $M_{22}$ is nonsingular.
	Next, the Schur complement of $M_{\lambda}$ with respect to the $\lambda  M_{22}$ block is
	$S := M_{11} - \zeros \frac{1}{\lambda} M_{11}^{-1} M_{12} = M_{11}$.
	Using the Schur complement \cite{haynsworth1968schur} we have 
	\[ 
	M_{\lambda}^{-1} = \begin{pmatrix}
		M_{11}^{-1} & \frac{1}{\lambda} M_{11}^{-1} M_{12} M_{22}^{-1} \\
		\zeros & \frac{1}{\lambda} M_{22}^{-1}
	\end{pmatrix}.
	\] 
	Taking $\lambda \rightarrow \infty$ yields the desired result.
\end{proof}

\begin{corollary}\label{lem:hoff-basic-result}
For any nonnegative integers  $m, n$ and $p$ consider matrices
$\MatIneq \in \R^{m \times n}$, $\MatEq \in \R^{p \times n}$ and 
vectors $\IneqRHS \in \R^{m}$, $\EqRHS \in \R^{p}$.
Let $\region := \{ \sol \in \R^{n} : \sol_i \ge 0, \forall i \in S \}$ for some $S \subseteq [ n ]$. Define the polytope,
$\polytope := \{ \sol \in \region : \MatIneq \sol \le \IneqRHS, \MatEq \sol = \EqRHS \}$. 
Let $\mathcal{G}$ be the set of all nonsingular submatrices of the matrix $\begin{pmatrix} \MatIneq \\
\MatEq \end{pmatrix}$ and let $\hoff = \frac{1}{\max_{G \in \mathcal{G}} \| G^{-1} \|_2}$. Then, for all $\sol \in \region$
we have
\[ 
\hoff \dist( \sol , \polytope) \le \left\| \begin{pmatrix} (\MatIneq \sol - \IneqRHS)^{+} \\
	\MatEq \sol - f \end{pmatrix} \right\|_2.
\]
\end{corollary}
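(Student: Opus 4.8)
The plan is to establish the inequality pointwise. Fix $\sol \in \region$; if $\sol \in \polytope$ there is nothing to prove, so assume otherwise, let $\sol^\star := \project{\polytope}{\sol}$ be the Euclidean projection, and set $g := \sol - \sol^\star$, so that $\dist(\sol,\polytope) = \norm{g}_2$. The first step is to expand $g$ through the optimality (normal-cone) conditions of the projection: $g$ lies in the normal cone of $\polytope$ at $\sol^\star$, hence is a combination of the outward normals of the constraints active at $\sol^\star$, with nonnegative coefficients on the inequality and nonnegativity normals. Writing $B := \{ i \in S : \sol^\star_i = 0 \}$ for the set of active nonnegativity constraints and letting $G_1$ stack the rows of $\MatIneq$ active at $\sol^\star$ together with all rows of $\MatEq$, I obtain
\[
g = G_1^\T w - \sum_{i \in B} \nu_i \e_i, \qquad \nu \ge 0,
\]
where the sub-vector of $w$ indexing the $\MatIneq$-rows is nonnegative and that indexing the $\MatEq$-rows is free. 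By Carath\'eodory's theorem for cones I may take the chosen active normals to be linearly independent, and a short linear-algebra argument then shows that the restriction $H := (G_1)_{:,\,[n]\setminus B}$ of $G_1$ to the non-pinned coordinates also has full row rank.

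Two estimates then drive the proof. First, I would bound $\norm{g}_2^2 = g^\T g$ by substituting the representation of $g$: since $\MatEq \sol^\star = \EqRHS$ and the selected rows of $\MatIneq$ are tight at $\sol^\star$, the vector $G_1 g$ equals the residuals $\MatIneq \sol - \IneqRHS$ and $\MatEq \sol - \EqRHS$ on the active rows, while $\e_i^\T g = \sol_i - \sol^\star_i = \sol_i \ge 0$ for $i \in B$. Using $\nu \ge 0$ together with $\sol_i \ge 0$, the nonnegativity cross term equals $-\sum_{i \in B} \nu_i \sol_i \le 0$ and may be dropped; using nonnegativity of the inequality-row multipliers and the elementary bound $t \le t^{+}$, the active-inequality residuals may be replaced by their positive parts; a Cauchy--Schwarz step then yields $\norm{g}_2^2 \le \norm{w}_2 \cdot \norm{((\MatIneq \sol - \IneqRHS)^{+},\, \MatEq \sol - \EqRHS)}_2$. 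Second, reading off the non-pinned coordinates of the representation gives $g_{[n]\setminus B} = H^\T w$, whence $\norm{w}_2 \le \sigma_{\min}(H^\T)^{-1} \norm{g_{[n]\setminus B}}_2 \le \sigma_{\min}(H^\T)^{-1}\norm{g}_2$. Combining the two displays and cancelling one factor of $\norm{g}_2$ gives $\dist(\sol,\polytope) \le \sigma_{\min}(H^\T)^{-1}\norm{((\MatIneq \sol - \IneqRHS)^{+}, \MatEq \sol - \EqRHS)}_2$.

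It remains to replace $\sigma_{\min}(H^\T)$ by $\hoff$. Since $H$ has full row rank, keeping a maximal independent set of its columns produces a square nonsingular submatrix $G$, which is then a nonsingular submatrix of $\begin{pmatrix} \MatIneq \\ \MatEq \end{pmatrix}$, so $G \in \mathcal{G}$ and $\sigma_{\min}(G) \ge \hoff$; moreover deleting columns of $H$ cannot increase this smallest singular value, since $\norm{G^\T z}_2 \le \norm{H^\T z}_2$ for every unit $z$, giving $\sigma_{\min}(H^\T) \ge \sigma_{\min}(G) \ge \hoff$, which closes the argument. The step I expect to be the main obstacle is exactly this elimination of the nonnegativity constraints: the naive route of writing $\polytope$ as a pure inequality system and invoking a standard Hoffman bound forces the identity rows $-\e_i$ into the candidate submatrices, and these genuinely enlarge $\max_{G} \norm{G^{-1}}_2$, so that route yields a strictly weaker constant. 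The crux is therefore the observation that, because $\sol \in \region$ keeps the nonnegativity residuals at zero while $\e_i^\T g = \sol_i \ge 0$, the active nonnegativity directions contribute a term of favorable sign that can be discarded rather than bounded; this is precisely what lets the whole estimate run with nonsingular submatrices of $\begin{pmatrix} \MatIneq \\ \MatEq \end{pmatrix}$ alone, and the only genuinely external ingredient is the elementary singular-value comparison that underlies the standard Hoffman bound.
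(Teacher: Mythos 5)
Your argument is correct in its essentials but takes a genuinely different route from the paper. The paper proves the corollary as a black-box reduction: it rewrites $\polytope$ as a pure inequality system in which the nonnegativity rows are scaled by $\lambda$, invokes the standard Hoffman bound of G\"uler et al.\ for each $\lambda$, and then sends $\lambda \to \infty$, using a Schur-complement limit (Lemma~\ref{lem:linear-algebra-schur-fact}) to show that the scaled identity rows drop out of the worst-case submatrix inverse, leaving only nonsingular submatrices of $\begin{pmatrix} \MatIneq \\ \MatEq \end{pmatrix}$. You instead reprove the Hoffman bound from first principles via the normal-cone decomposition of $\sol - \project{\polytope}{\sol}$, and exploit the sign structure directly: because $\sol \in \region$, the active nonnegativity normals contribute $-\sum_{i} \nu_i \sol_i \le 0$ and can simply be discarded. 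Your route is self-contained (no external Hoffman theorem, no limit argument) and makes the mechanism transparent; the paper's route is shorter on the page but outsources the real work to the cited theorem and the appendix lemma. Both land on the same constant.

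One step as written needs repair. After invoking Carath\'eodory you restrict $G_1$ to the columns $[n]\setminus B$ where $B$ is the set of \emph{all} indices with $\sol^\star_i = 0$, and claim $H := (G_1)_{:,[n]\setminus B}$ has full row rank. That is false in general: Carath\'eodory may select only a proper subset $B' \subsetneq B$ of the nonnegativity normals, and deleting the extra columns in $B \setminus B'$ can destroy row rank (e.g.\ $\MatEq = (1\ 1\ 0)$, $\EqRHS = 0$, $S = [3]$, $\sol^\star = \zeros$: the selected normal is $\MatEq^\T$ alone, $B = [3]$, and $H$ is empty). The fix is immediate and stays entirely within your framework: restrict to $[n]\setminus B'$, where $B'$ indexes only the nonnegativity normals actually appearing in the linearly independent Carath\'eodory representation. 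Then $g_{[n]\setminus B'} = H^\T w$ still holds, and linear independence of the full selected family $\{$rows of $G_1\} \cup \{-\e_i\}_{i \in B'}$ does force $(G_1)_{:,[n]\setminus B'}$ to have full row rank, since any dependence among its rows would yield a vector supported on $B'$ and hence a nontrivial dependence in the full family. With that substitution the rest of your argument, including the singular-value comparison $\sigma_{\min}(H^\T) \ge \sigma_{\min}(G) \ge \hoff$, goes through.
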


\begin{proof}%
	Consider the system
	\[
	\left[ K_{\lambda} := \begin{pmatrix} 
		\MatIneq \\
		\MatEq \\ 
		-\MatEq \\
		-\lambda E
	\end{pmatrix} \right] \sol \le \begin{pmatrix} 
		\IneqRHS \\
		\EqRHS \\
		-\EqRHS \\
		\zeros \end{pmatrix} =: k
	\]
	for $\lambda \in (0,\infty)$,
	where $-\lambda E \sol \le \zeros$ corresponds to the constraint $\sol \in \region$ with each row of $E$ containing exactly one entry (i.e., a one).
	\newcommand{\subK}{Q} \
	Let $\subK$ be a nonsingular submatrix of $K_{\lambda}$ decomposed into submatrices of $\MatIneq$, $\MatEq$, $-\MatEq$ and $E$ which we call $\subK_{\MatIneq}, \subK_{\MatEq}, \subK_{-\MatEq}, \subK_{E}$ respectively such that
	\[
	\subK = \begin{pmatrix} 
		\subK_{\MatIneq} \\
		\subK_{\MatEq} \\ 
		\subK_{-\MatEq} \\
		-\lambda \subK_{E}
	\end{pmatrix}.
	\]
	As each row of $E$ contains exactly one nonzero entry, each row of $\subK_E$ must contain at most one nonzero entry.
	Since $\subK$ is nonsingular it follows that $\subK_{E}^\T v \neq \zeros$ for all $v \neq \zeros$.
	By choosing $v = \e_i$ for each row $i$, we deduce each row of $\subK_{E}$ must contain exactly one nonzero entry.
	Therefore there exists matrices $M_{11}, M_{12}$ and $M_{22}$ such that $M_{11}$ and $M_{12}$ are submatrices of $\begin{pmatrix}
		\MatIneq \\
		\MatEq \\
		-\MatEq 
	\end{pmatrix}$,  $M_{22}$ is a nonsingular square submatrix of $\subK_{E}$, and 
	\[ 
	\Pi \subK = \begin{pmatrix}
		M_{11} & M_{12} \\
		\zeros & \lambda M_{22} 
	\end{pmatrix}
	\] 
	where $\Pi$ is some permutation matrix; let $\mathcal{M}$ represent the set of all such matrices.
	Note that since $\subK$ is square and $M_{22}$ is square, we must have $M_{11}$ is square.
	
	Applying \cite[Theorem 4.2.]{guler1995approximations} gives $\hoff_{\lambda} \dist( \sol , \polytope) \le \left\| (K \sol - k)^{+} \right\|_2 $ for all $\sol \in \region$ with
	$\hoff_{\lambda} := \max_{M \in \mathcal{M}} \left\| \begin{pmatrix}
		M_{11} & M_{12} \\
		\zeros & \lambda M_{22} 
	\end{pmatrix}^{-1} \right\|_2$.
	It follows 
	by Lemma~\ref{lem:linear-algebra-schur-fact} (see \Cref{sec:lem:linear-algebra-schur-fact}) that
	\ifOptLetters
	\begin{flalign*}
		&\lim_{\lambda \rightarrow \infty} \hoff_{\lambda} = \lim_{\lambda \rightarrow \infty}  \max_{M \in \mathcal{M}} \left\| \begin{pmatrix}
		M_{11} & M_{12} \\
		\zeros & \lambda M_{22} 
	\end{pmatrix}^{-1} \right\|_2 = \\
&\max_{M \in \mathcal{M}} \lim_{\lambda \rightarrow \infty} \left\| \begin{pmatrix}
		M_{11} & M_{12} \\
		\zeros & \lambda M_{22} 
	\end{pmatrix}^{-1} \right\|_2 = \max_{M \in \mathcal{M}} \| M_{11}^{-1} \|_2.
\end{flalign*}
	\else
	\[
	\lim_{\lambda \rightarrow \infty} \hoff_{\lambda} = \lim_{\lambda \rightarrow \infty}  \max_{M \in \mathcal{M}} \left\| \begin{pmatrix}
		M_{11} & M_{12} \\
		\zeros & \lambda M_{22} 
	\end{pmatrix}^{-1} \right\|_2 = \max_{M \in \mathcal{M}} \lim_{\lambda \rightarrow \infty} \left\| \begin{pmatrix}
		M_{11} & M_{12} \\
		\zeros & \lambda M_{22} 
	\end{pmatrix}^{-1} \right\|_2 = \max_{M \in \mathcal{M}} \| M_{11}^{-1} \|_2. %
	\]
	\fi
	Recall that $M_{22}$ is square, and that $\subK$ and therefore $\Pi \subK$ is nonsingular. 
	Therefore by Lemma~\ref{lem:linear-algebra-schur-fact} we deduce $M_{11}$ is nonsingular 
	and consequently either the row $-\MatEq_{i,\cdot}$ or $\MatEq_{i,\cdot}$ appears in $M_{11}$.
	Moreover, negating rows of $M_{11}$ does not effect its maximum singular value.
	Hence, there exists $G \in \mathcal{G}$ such that $\| G^{-1} \|_2 = \| M_{11}^{-1} \|_2$.
\end{proof}

\section{Analysis of restarted PDHG on totally unimodular linear programs}\label{sec:analysis-restarted-PDHG}

This section analyzes the worst-case convergence rate of 
restarted PDHG on totally unimodular linear programs.
For completeness, we first define what it means for a linear program
to be totally unimodular.
\begin{definition}[\cite{hoffmann1956integral}]
A matrix $A$ is totally unimodular if every square submatrix is unimodular (i.e., has determinant $0$, $1$, or $-1$).
The linear program \eqref{eq:poi-primal-dual} is totally unimodular if 
$A$ is totally unimodular and the entries of $c$ and $b$ are all integers.
\end{definition}

The following standard result will be useful.

\begin{lemma}\label{lem:total-unimodularity-preserved}
	If $A \in \R^{\nrow \times \ncol}$ is a totally unimodular matrix then (i) $[A ~ \e_i]$ for any $i \in [\nrow]$, (ii) $A^\T$ and
	(iii) $\begin{pmatrix}
		A & \zeros \\
		\zeros & A^\T
	\end{pmatrix}$ are totally unimodular matrices. Moreover, (iv)
	if $A$ is a nonsingular then $(A^{-1})_{ij}\in \{-1, 0, 1 \}$ for all $i \in [\nrow]$, $j \in [\ncol]$
	and $\| A^{-1} \|_2 \le \ncol = \nrow $.
\end{lemma}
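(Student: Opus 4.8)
The plan is to derive all four claims directly from the definition of total unimodularity, using the cofactor (Laplace) expansion of determinants and the invariance of determinants under transposition; the unifying observation is that the determinant of any square submatrix of each of the matrices in question can be reduced to the determinant of a square submatrix of $A$. I would begin with part (ii), which is immediate: every square submatrix of $A^\T$ is the transpose of a square submatrix of $A$, and transposition preserves determinants, so every square submatrix of $A^\T$ has determinant in $\{-1,0,1\}$ and $A^\T$ is totally unimodular.

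For part (i), I would take an arbitrary square submatrix $M$ of $[A ~ \e_i]$. If $M$ does not use the appended column it is a submatrix of $A$ and its determinant lies in $\{-1,0,1\}$ by assumption. Otherwise $M$ contains the (row-restricted) column $\e_i$, which has at most one nonzero entry, equal to $1$ and located in row $i$. If row $i$ is not among the selected rows, that column is zero and $\det M = 0$; if it is selected, expanding $\det M$ along that column yields $\pm 1$ times the determinant of a submatrix of $A$, which again lies in $\{-1,0,1\}$.

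For part (iii), consider a square submatrix $M$ of the block-diagonal matrix obtained by selecting a row set $R$ and a column set $C$. Partitioning $R$ and $C$ according to whether their indices fall in the $A$-block or the $A^\T$-block, and using that the two off-diagonal blocks are identically zero, writes $M$ in block form with both cross blocks zero and diagonal blocks that are submatrices of $A$ and of $A^\T$ respectively. If the number of selected top rows differs from the number of selected left columns, one diagonal block is strictly taller than it is wide, which forces a linear dependence among the selected rows and hence $\det M = 0$; otherwise both diagonal blocks are square and $\det M$ factorizes as the product of a determinant of a submatrix of $A$ and a determinant of a submatrix of $A^\T$, each in $\{-1,0,1\}$ by the definition and part (ii).

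Finally, for part (iv), since $A$ is nonsingular it is square with $\nrow = \ncol$ and $\det A \in \{-1, 1\}$. Cramer's rule gives $(A^{-1})_{ij} = \pm \det(M_{ji}) / \det A$, where $M_{ji}$ is obtained from $A$ by deleting row $j$ and column $i$; this equals $\pm$ a determinant of a square submatrix of $A$, hence lies in $\{-1,0,1\}$. The spectral-norm bound then follows from the Frobenius bound $\| A^{-1} \|_2 \le (\sum_{i,j} (A^{-1})_{ij}^2)^{1/2} \le \sqrt{\nrow^2} = \nrow$. These are all standard facts, and the only step where I would be careful is the rectangular-block case in part (iii): one must verify that an unequal split of rows and columns genuinely forces singularity, rather than invoking a block-determinant identity that is valid only when both diagonal blocks are square.
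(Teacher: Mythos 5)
Your proof is correct. The paper itself disposes of parts (i)--(iii) by citation (to Seymour's decomposition paper) and only writes out the short argument for (iv), so the main difference is that you supply self-contained elementary proofs of the first three claims via Laplace expansion and the block-diagonal determinant factorization; your care about the rectangular-split case in (iii) (unequal row/column counts forcing a zero determinant rather than blindly applying a block-determinant identity) is exactly the right point to flag, and your argument there is sound. For (iv) you and the paper take essentially the same route --- Cramer's rule to show $(A^{-1})_{ij}\in\{-1,0,1\}$ followed by $\|A^{-1}\|_2\le\|A^{-1}\|_F\le \nrow$ --- with one small stylistic difference: the paper invokes part (i) because it uses the column-replacement form of Cramer's rule (the replaced-column determinant is a square submatrix of $[A ~ \e_i]$), whereas you use the adjugate/cofactor form, whose minors are already square submatrices of $A$ itself, so no appeal to (i) is needed. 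Both are valid; yours is marginally more economical, while the paper's citation-based treatment of (i)--(iii) is shorter on the page.
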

\begin{proof}
	The proof of (ii) follows from the fact that the determinant is
	invariant to transpose. For (i) and (iii), we use the well-known formula
	for the determinant of block matrices:
	\[
	\det\begin{pmatrix}
		P & Q \\
		\zeros & S 
	\end{pmatrix} = \det(P) \det(S).
	\]
	Thus (i) follows by setting $P = 1$, $Q$ to be the $i$th row of $A$ and $S$ the
	remaining portion (also using that the absolute value of the determinant
	is preserved by row and column permutations). Next, (iii) follows by
	setting $Q = \zeros$, $P = A$ and $S = A^\top$.
	
	To see (iv), note by Cramer's rule and Lemma~\ref{lem:total-unimodularity-preserved}.i, the inverse of a square nonsingular totally unimodular matrix has 
	all entries equal to either $-1$, $0$ or $1$, and therefore
	$\| A^{-1} \|_2 \le \| A^{-1} \|_F \leq \ncol = \nrow$.
\end{proof}

\renewcommand{\v}{v}
\newcommand{\V}{V}

The key insight of this paper is Lemma~\ref{lem:spectral-norm-inverse}, which allows us to reason about 
matrices that are nonsingular and after removing one row totally unimodular.
The proof uses Lemma~\ref{lem:total-unimodularity-preserved} 
to decompose the matrix into the sum 
of a totally unimodular matrix and a rank one component. 
The Sherman-Morrison formula is then applied to analyze the 
inverse and its spectral norm.

\newcommand{\LFrows}[0]{n}

\begin{lemma}\label{lem:spectral-norm-inverse}
Suppose the matrix $\begin{pmatrix}
	\v^\T \\
	\V
\end{pmatrix}$ is nonsingular
where $\V$ is a totally unimodular matrix with $\LFrows$ rows, and $\v$ is a vector of length $\LFrows+1$ with rational entries. 
Let $\M$  be a positive integer such that for each $i \in [\LFrows+1]$, there exists an integer $k_i$ such that $\v_i = k_i / \M$. Then 
\[
\left\| \begin{pmatrix}
	\v^\T \\
	\V
\end{pmatrix}^{-1} \right\|_2 \le \LFrows + 1 + \M \left( (\LFrows + 1)^{1.5} \| \v \|_2 + \LFrows + 1 \right).
\]
\end{lemma}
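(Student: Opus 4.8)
The plan is to follow the suggested route: write the square matrix $N := \begin{pmatrix} \v^\T \\ \V \end{pmatrix}$ as a nonsingular totally unimodular matrix plus a rank-one term, and then read off $\|N^{-1}\|_2$ from the Sherman--Morrison formula. First I would observe that, since $N$ is nonsingular, $\V$ has full row rank $\LFrows$, so its kernel is spanned by a single nonzero vector $u$ with $\V u = \zeros$; picking any index $i$ with $u_i \neq 0$ guarantees that $\e_i$ lies outside the row space of $\V$, hence $B := \begin{pmatrix} \e_i^\T \\ \V \end{pmatrix}$ is nonsingular. To see $B$ is totally unimodular, apply \Cref{lem:total-unimodularity-preserved}: $\V^\T$ is totally unimodular by (ii), so $[\V^\T~\e_i]$ is totally unimodular by (i); permuting that column to the front and transposing (again (ii)) returns $B$, and these operations preserve total unimodularity. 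Thus \Cref{lem:total-unimodularity-preserved}(iv) gives that every entry of $B^{-1}$ lies in $\{-1,0,1\}$ and $\|B^{-1}\|_2 \le \LFrows + 1$. By construction $N = B + \e_1 (\v - \e_i)^\T$ is a rank-one update of $B$.

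Next I would apply Sherman--Morrison,
\[
N^{-1} = B^{-1} - \frac{B^{-1}\e_1\,(\v - \e_i)^\T B^{-1}}{\gamma}, \qquad \gamma := 1 + (\v - \e_i)^\T B^{-1}\e_1 ,
\]
whose validity follows from the matrix determinant lemma $\det(N) = \det(B)\,\gamma$ and the nonsingularity of $N$ (so $\gamma \neq 0$). Taking spectral norms and using that a rank-one matrix $ab^\T$ has spectral norm $\|a\|_2 \|b\|_2$,
\[
\|N^{-1}\|_2 \le \|B^{-1}\|_2 + \frac{\|B^{-1}\e_1\|_2\,\|(\v - \e_i)^\T B^{-1}\|_2}{|\gamma|} .
\]
The factors involving $B^{-1}$ are routine: $B^{-1}\e_1$ and $\e_i^\T B^{-1}$ are a column and a row of $B^{-1}$, each with entries in $\{-1,0,1\}$ and hence of norm at most $\sqrt{\LFrows+1}$, while $\|\v^\T B^{-1}\|_2 \le \|B^{-1}\|_2 \|\v\|_2 \le (\LFrows+1)\|\v\|_2$. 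Splitting $\v - \e_i$ and treating the $\e_i$ piece with the row bound (rather than crudely bounding $\|\v - \e_i\|_2$) then yields $\|(\v - \e_i)^\T B^{-1}\|_2 \le (\LFrows+1)\|\v\|_2 + \sqrt{\LFrows+1}$; this refinement is exactly what produces the additive $\LFrows+1$ instead of $(\LFrows+1)^{1.5}$ in the stated bound.

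The main obstacle, and the only place the integrality hypothesis enters, is the lower bound $|\gamma| \ge 1/\M$. Since $B$ is unimodular, $\det(B) = \pm 1$, so $\gamma = \pm \det(N)$. Expanding $\det(N)$ in cofactors along its first row $\v^\T$ gives $\det(N) = \sum_{j} \v_j C_j$, where each cofactor $C_j$ is, up to sign, the determinant of an $\LFrows \times \LFrows$ submatrix of the totally unimodular matrix $\V$ and therefore lies in $\{-1,0,1\}$. Writing $\v_j = k_j/\M$ gives $\det(N) = \frac{1}{\M}\sum_j k_j C_j$ with $\sum_j k_j C_j \in \integers$, and this integer is nonzero because $N$ is nonsingular; hence $|\det(N)| \ge 1/\M$ and $1/|\gamma| \le \M$. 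Substituting the three bounds gives
\[
\|N^{-1}\|_2 \le (\LFrows+1) + \M\sqrt{\LFrows+1}\big((\LFrows+1)\|\v\|_2 + \sqrt{\LFrows+1}\big) = (\LFrows+1) + \M\big((\LFrows+1)^{1.5}\|\v\|_2 + (\LFrows+1)\big),
\]
which is the claim.
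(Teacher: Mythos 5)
Your proof is correct and follows essentially the same route as the paper: the same decomposition $N = B + \e_1(\v - \e_i)^\T$ with $B$ a nonsingular totally unimodular matrix, Sherman--Morrison, and the integrality argument forcing $\abs{\gamma} \ge 1/\M$. The only cosmetic difference is that you derive the denominator bound via the matrix determinant lemma and a cofactor expansion, whereas the paper notes directly that $B^{-1}\e_1$ is an integer vector so $\gamma$ is an integer multiple of $1/\M$; both are equivalent.
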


\begin{proof}
First observe that $
\begin{pmatrix}
	\v^\T \\
	\V
\end{pmatrix} = \begin{pmatrix}
e_j^\T \\
\V
\end{pmatrix} + \e_1 (\v - e_j)^\T$
where $j$ is chosen such that $\submat := \begin{pmatrix}
e_j^\T \\
\V
\end{pmatrix}$
is nonsingular. Such a $j$ exists because the rows of $\V$ are linearly independent and $\V$ has $\LFrows$ rows.
Therefore, there must exist some 
$e_j$ that is not in the span of the rows of $\V$, making the dimension of the subspace 
spanned by the rows of $\submat$ equal to $\LFrows + 1$, which implies it is nonsingular.

Note that by Lemma~\ref{lem:total-unimodularity-preserved}, $W$ is totally unimodular.
By the Sherman-Morrison formula \cite{sherman1950adjustment}:
\begin{flalign}\label{eq:apply-sherman-morrison}
\begin{pmatrix}
\v^\T \\
\V
\end{pmatrix}^{-1} = \submat^{-1} - 
\frac{\submat^{-1} \e_1 (\v - \e_j)^\T \submat^{-1}}{1 + (\v - \e_j)^\T \submat^{-1} \e_1}.
\end{flalign}
As $\submat^{-1} \e_1$ is an integer vector, and $\v_i = k_i / \M$ where $k_i$ is an integer and $\M$ is a positive integer then there exists some integer $z$ such that $1 + (\v - \e_j)^\T \submat^{-1} \e_1 = z / \M$.
Since $1 + (\v - \e_j)^\T \submat^{-1} \e_1 \neq 0$ it follows that 
\begin{flalign}\label{eq:M-inv-bound}
\abs{1 + (\v - \e_j)^\T \submat^{-1} \e_1} \ge \frac{1}{\M}.
\end{flalign}
Using \Cref{lem:total-unimodularity-preserved}.iv we have
$\| \submat^{-1} \|_2 \le \LFrows + 1$,
$\| \submat^{-1} \e_1 \v \submat^{-1} \|_2 \le 
\| \submat^{-1} \e_1 \|_2 \| \v \submat^{-1} \|_2 \le (\LFrows + 1)^{0.5} \|  \submat^{-1} \|_2 \| \v \|_2 \le (\LFrows + 1)^{0.5} (\LFrows + 1) \| \v \|_2$
and $\| \submat^{-1} \e_1 \e_j^\T \submat^{-1} \|_2 \le \| \submat^{-1} \e_1 \|_2 \| \submat^{-1} \e_j \|_2 \le \LFrows + 1$.
Therefore, by \eqref{eq:apply-sherman-morrison}
and \eqref{eq:M-inv-bound}
we get
\[
\left\| \begin{pmatrix}
\v^\T \\
\V
\end{pmatrix}^{-1} \right\|_2 \le \LFrows + 1 + \M ((\LFrows + 1)^{1.5} \| \v \|_2 + \LFrows + 1).
\]
\end{proof}

\Cref{lem:main-result} characterizes
the sharpness constant of the normalized duality gap. The 
proof uses 
\Cref{lem:normalized-duality-gap-LP-basic-result} and \Cref{lem:spectral-norm-inverse}.

\begin{lemma}\label{lem:main-result}
Let $R := \lceil 8 \nrow^{1.5} \ME \rceil$.
If \eqref{primal-LP} is a totally unimodular linear program with an optimal solution, then there exists $\alpha > 0$ such that
$\hoff = \Omega\left( \frac{1}{\nrow^{2.5} \ME}  \right)$ and $\hoff \dist(z, \ZStar) \le \rho_r(z)$
for all $z \in W_R(\zeros)$ and $r \in (0, R]$.
Moreover, there exists $z^\star \in \ZStar$ such that $\| z^\star \|_2 \le R/4$.
\end{lemma}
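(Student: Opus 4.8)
The plan is to realize $\ZStar$ as the solution set of an explicit linear system, apply the refined Hoffman bound (\Cref{lem:hoff-basic-result}) to that system after rescaling the duality‑gap row by $1/R$, control the resulting Hoffman constant with the total‑unimodularity machinery (\Cref{lem:total-unimodularity-preserved} and \Cref{lem:spectral-norm-inverse}), and finally feed this into \Cref{lem:normalized-duality-gap-LP-basic-result} to obtain sharpness. I would first write
\[
\ZStar = \left\{ (x,y) : Ax = b,\ x \ge 0,\ A^\T y \ge c,\ \tfrac{1}{R}(c^\T x - b^\T y) \le 0 \right\},
\]
the equivalence following from weak and strong duality: a primal‑feasible, dual‑feasible pair with nonpositive gap must have zero gap and hence be optimal, and an optimum exists by hypothesis. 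Rescaling the gap row by $1/R$ is deliberate. It makes the right‑hand side below match the residual vector in \Cref{lem:normalized-duality-gap-LP-basic-result}, and, since $b,c$ are integral and $R$ is a positive integer, every entry of the gap row is an integer over $R$. Applying \Cref{lem:hoff-basic-result} with $U = Z$, equalities $Ax = b$, and inequalities $A^\T y \ge c$ together with the scaled gap then yields $\hoff_0 \dist(z,\ZStar) \le \| (\cdot) \|_2$ with $\hoff_0 = 1/\max_{G \in \mathcal{G}} \| G^{-1} \|_2$, where the right‑hand side is exactly the vector appearing in \Cref{lem:normalized-duality-gap-LP-basic-result}.

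The heart of the argument is bounding $\| G^{-1} \|_2$ over nonsingular submatrices $G$ of $\begin{pmatrix} 0 & -A^\T \\ \frac{1}{R} c^\T & -\frac{1}{R} b^\T \\ A & 0 \end{pmatrix}$, split according to whether $G$ uses the gap row. The block matrix $\begin{pmatrix} 0 & -A^\T \\ A & 0 \end{pmatrix}$ is totally unimodular by \Cref{lem:total-unimodularity-preserved}; if $G$ avoids the gap row it is a nonsingular submatrix of this, hence itself totally unimodular, and \Cref{lem:total-unimodularity-preserved}.iv gives $\| G^{-1} \|_2 \le \nrow$. If $G$ uses the gap row, then after permuting it to the top, $G = \begin{pmatrix} v^\T \\ V \end{pmatrix}$ where $V$ is a (totally unimodular) submatrix of $\begin{pmatrix} 0 & -A^\T \\ A & 0 \end{pmatrix}$ and $v$ is the restricted gap row. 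Linear independence of the rows of $G$ forces at most $\nrow$ rows of each of the two block types, so $G$ has size $n+1 \le 2\nrow + 1$, while $v$ has entries $k_i/R$ with $\| v \|_2 \le \frac{1}{R}\sqrt{2\nrow+1}\,\ME$. \Cref{lem:spectral-norm-inverse} with $\M = R$ then gives $\| G^{-1} \|_2 \le (n+1) + R\big((n+1)^{1.5}\| v \|_2 + (n+1)\big)$, whose dominant term is $R(n+1) = O(\nrow^{2.5}\ME)$. Hence $\max_{G} \| G^{-1} \|_2 = O(\nrow^{2.5}\ME)$ and $\hoff_0 = \Omega\big(1/(\nrow^{2.5}\ME)\big)$.

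Combining the two displays, for $z \in W_R(\zeros)$ and $r \in (0,R]$ we get $\rho_r(z) \ge \tfrac{1}{2}\| (\cdot) \|_2 \ge \tfrac{1}{2}\hoff_0 \dist(z,\ZStar)$, so $\hoff := \tfrac{1}{2}\hoff_0 = \Omega\big(1/(\nrow^{2.5}\ME)\big)$ is the claimed sharpness constant. For the last assertion I would take $z^\star = (x^\star, y^\star)$ with $x^\star$ a vertex (basic feasible) optimal primal solution and $y^\star$ a vertex optimal dual solution; each has the form (inverse of a nonsingular totally unimodular submatrix of $A$ or $A^\T$) times a subvector of $b$ or $c$, and by \Cref{lem:total-unimodularity-preserved}.iv that inverse has entries in $\{-1,0,1\}$, yielding $\| x^\star \|_2, \| y^\star \|_2 \le \nrow^{1.5}\ME$ and thus $\| z^\star \|_2 \le \sqrt{2}\,\nrow^{1.5}\ME \le R/4$.

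I expect the main obstacle to be forcing the submatrix size down to $O(\nrow)$ rather than $O(\ncol)$, which is precisely what the refined Hoffman bound buys and what keeps the final constant independent of $\ncol$, together with verifying the hypotheses of \Cref{lem:spectral-norm-inverse}: that deleting the gap row leaves a totally unimodular matrix, and that the gap‑row entries are integers over $R$. The rescaling by $1/R$ is the key device here, simultaneously aligning with \Cref{lem:normalized-duality-gap-LP-basic-result} and shrinking $\| v \|_2$ so that the bound balances at $O(\nrow^{2.5}\ME)$. A secondary technicality is the bound on $\| z^\star \|_2$ when $A$ is rank deficient, which I would handle by choosing $y^\star$ in $\mathrm{range}(A)$ and passing to a maximal set of independent rows of $A$ (still totally unimodular as a submatrix).
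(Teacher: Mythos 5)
Your proposal is correct and follows essentially the same route as the paper: characterize $\ZStar$ via the KKT system with the duality-gap row scaled by $1/R$, bound $\|G^{-1}\|_2$ for nonsingular submatrices of $\begin{pmatrix} \frac{1}{R}c^\T & -\frac{1}{R}b^\T \\ A & 0 \\ 0 & A^\T\end{pmatrix}$ using \Cref{lem:total-unimodularity-preserved} and \Cref{lem:spectral-norm-inverse} with $\M = R$ after arguing the submatrix has $O(\nrow)$ rows, then combine \Cref{lem:hoff-basic-result} with \Cref{lem:normalized-duality-gap-LP-basic-result}, and bound $\|z^\star\|_2$ via an optimal basic solution. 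The only (harmless) differences are presentational: you explicitly treat the case where the submatrix omits the gap row, which the paper leaves implicit.
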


\begin{proof}
First, we get a bound on the norm of an optimal solution. As there is an optimal solution to the linear program there exists an optimal basic feasible solution  \cite[Chapter 2]{bertsimas1997introduction}.
Let $B$ be an optimal basis with corresponding optimal solutions $x^\star$ and $y^\star$. It follows that 
$\| x^{\star} \|_2 = \| x^\star_B \|_2 = \| A_B^{-1} b \|_2 \le \| A_B^{-1} \|_2 \| b \|_2 \le \nrow \| b \|_2 \le \nrow^{1.5} \ME$
and 
$\| y^\star \|_2 = \| (A_B^{-1})^\T c_B \|_2 \le \| (A_B^{-1})^\T \|_2 \| c_B \|_2 \le \nrow \| c \|_2 \le \nrow^{1.5} \ME$
where $\| A_{B}^{-1} \|_2 \le \nrow$
by \Cref{lem:total-unimodularity-preserved}.iv.
With $z^\star = (x^\star, y^\star)$ we conclude $\| z^\star \|_2 \le \| x^{\star} \|_2  + \| y^{\star} \|_2   \le 2 \nrow^{1.5} \ME \le R/4$.
	
Consider a square nonsingular submatrix
$\begin{pmatrix}
\v^\T \\
\V
\end{pmatrix}$
of the matrix 
\[
\begin{pmatrix}
\frac{1}{R} c^\T & - \frac{1}{R} b^\T \\
A & 0 \\
0 & A^\T
\end{pmatrix}
\]
where $\v$ is a subvector of $\begin{pmatrix} \frac{1}{R} c^\T & - \frac{1}{R} b^\T \end{pmatrix}$ and $\V$ is 
a submatrix of $\begin{pmatrix}
A & 0 \\
0 & A^\T
\end{pmatrix}$. Note that $\V$ is totally unimodular by Lemma~\ref{lem:total-unimodularity-preserved}.

We now prove $\V$ contains at most $2 \nrow$ rows. Note $A$ contains at most $\nrow$ rows by definition and the submatrix of $\V$ corresponding to $A^{\T}$ contains at
most $\nrow$ columns.
Consequently,  the submatrix of $\V$ corresponding to $A^{\T}$ 
 contains at most $\nrow$ rows; otherwise, it would be row rank-deficient rendering $\begin{pmatrix}
	\v^\T \\
	\V
\end{pmatrix}$ row rank-deficient and contradicting our assumption that $\begin{pmatrix}
\v^\T \\
\V
\end{pmatrix}$ is nonsingular.

By \Cref{lem:spectral-norm-inverse} with $\M = R = O(H \nrow^{1.5})$, $\LFrows = 2 \nrow$ and using that $\| \v \|_2 \le \frac{1}{R} (\| c \|_2 + \| b \|_2) \le  2 \ME \nrow^{0.5} / R$ we get
\begin{flalign*}
\left\| \begin{pmatrix} \v^\T \\ \V \end{pmatrix}^{-1}  \right\|_2 &\le  \LFrows + 1 + \M ( (\LFrows + 1)^{1.5} \| \v \|_2 + \LFrows + 1 ) \\
&= O\left( \nrow^{2.5} \ME  \right).
\end{flalign*}
This implies by \Cref{lem:hoff-basic-result} that
\begin{flalign*}%
\left\| \begin{pmatrix} \frac{1}{R} (c^\T x - b^\T y )^{+} \\
A x - b \\
(A^\T y - c)^{+} \end{pmatrix} \right\|_2 
\ge  \Omega\left( \frac{1}{\nrow^{2.5} \ME} \right) \dist(z, \ZStar).
\end{flalign*}
Combining this inequality with Lemma~\ref{lem:normalized-duality-gap-LP-basic-result} shows
$\hoff \dist(z, \ZStar) \le \rho_r(z)$.
\end{proof}

We are now ready to prove the main result, \Cref{thm:main-result}. Note that 
$\| A \|_2$ can be readily estimated by power iteration  with high probability 
in $\tilde{O}(1)$ matrix-vector multiplications \cite{kuczynski1992estimating}. Therefore, it is possible to 
select a step size that meets the requirements of the theorem. 

\begin{theorem}\label{thm:main-result}
If \eqref{primal-LP} is a totally unimodular linear program with an optimal solution, then 
Algorithm~\ref{alg:restarted-pdhg} starting from 
$z^{0,0} = \zeros$ with $\frac{1}{4 \| A \|_2} \le \eta \le \frac{1}{2 \| A \|_2}$ and $\tau^{0} = 1$ requires at most 
\[
O\left( \ME \nrow^{2.5} \sqrt{\nnz} \log\left( \frac{\ncol \ME}{\epsilon} \right) \right)
\] 
matrix-vector multiplications to obtain 
a point satisfying $\dist(\ZStar, z^{n,0}) \le \epsilon$.
\end{theorem}

\begin{proof}
By 	\Cref{lem:restarted-PDHG-distance-bound}, we have  $z^{n,0} \in W_{\theta \dist(z^{0,0}, \ZStar)}(\zeros)$ for $\theta = 2 \sqrt{\frac{1 + \eta \| A \|_2}{1 - \eta \| A \|_2}} \le 2 \sqrt{2} \le 4$.
By \Cref{lem:main-result},
$\rho_r(z)$ is $\Omega\left(\nrow^{-2.5} \ME^{-1}  \right) $-sharp for all $\| z \|_2 \le 4 \dist(\zeros, \ZStar)$ and $r \le 4 \dist(\zeros, \ZStar)$.
Combining this with \Cref{thm:restarted-pdhg-basic-result} (using $\tau^0 = 1$) gives  $t^{\star} = \Big\lceil \frac{2 C (q + 2)}{\alpha \beta} \big\rceil = O( \nrow^{2.5} \ME \| A \|_2)$ and $\dist(z^{n,0},\ZStar) \le \beta^{n} t^{\star} \dist(z^{0,0}, \ZStar)$.
Hence, for $n \ge  \log_{1/\beta}(t^\star / \epsilon)$ we have $\dist(z^{n,0},\ZStar) \le \epsilon$ and the total number of iterations is 
$O\left( \ME \nrow^{2.5} \| A \|_2 \log\left( \frac{\nrow \ME \| A \|_2}{\epsilon} \right) \right)$.
From this bound, we obtain the result since $\| A \|_2 \le \| A \|_F \le \sqrt{\nnz}$ and $\log\left( \frac{\nrow \ME \| A \|_2}{\epsilon} \right)  \le \log\left( \frac{\ncol^2 \ME}{\epsilon} \right) \le 2 \log\left( \frac{\ncol \ME}{\epsilon} \right) $ because $\| A \|_2 \le  \sqrt{\nnz} \le \sqrt{\nrow \ncol} \le \ncol$ where the last inequality uses the assumption that $\nrow \le \ncol$. 
\end{proof}

\section*{Acknowledgments}

The author thanks Yinyu Ye and Javier Pe\~{n}a for useful feedback.
The author was supported by a Google Research Scholar award, AFOSR grant \#FA9550-23-1-0242 and by the NSF-BSF program, under NSF grant \#2239527.

\bibliographystyle{plainnat}

\bibliography{master.bib} %

\end{document}